\documentclass[a4paper,11pt]{article}
\usepackage[T1]{fontenc}
\usepackage{lmodern,amsmath,amsthm,amsfonts,amssymb,graphicx,float,microtype,thmtools,underscore,mathtools,xurl,thm-restate,wrapfig}
\usepackage[dvipsnames,svgnames,table]{xcolor}
\usepackage[shortlabels]{enumitem}
\setlist[itemize]{topsep=0ex,itemsep=0ex,parsep=0ex}
\setlist[enumerate]{topsep=0ex,itemsep=0ex,parsep=0ex}
\usepackage[unicode=true]{hyperref}
\hypersetup{
colorlinks,
breaklinks=true,
linkcolor={blue!60!black},
citecolor={black},
urlcolor={blue!60!black},
pdftitle={Tree-Decompositions of Graphs with Given Pathwidth}}
\usepackage[capitalise, compress, nameinlink, noabbrev]{cleveref}
\crefname{lem}{Lemma}{Lemmas}
\crefname{thm}{Theorem}{Theorems}
\crefname{cor}{Corollary}{Corollaries}
\newcommand{\defn}[1]{\textcolor{Maroon}{\emph{#1}}}

\usepackage[longnamesfirst,numbers,sort&compress]{natbib}
\makeatletter
\def\NAT@spacechar{~}
\makeatother
\usepackage[tmargin=30mm,bmargin=30mm,lmargin=30mm,rmargin=30mm]{geometry}
\renewcommand{\baselinestretch}{1.1}
\allowdisplaybreaks

\DeclarePairedDelimiter{\floor}{\lfloor}{\rfloor}

\renewcommand{\epsilon}{\varepsilon}
\renewcommand{\emptyset}{\varnothing}

\renewcommand{\geq}{\geqslant}
\renewcommand{\leq}{\leqslant}

\DeclareMathOperator{\dist}{dist}

\DeclareMathOperator{\tw}{tw}

\DeclareMathOperator{\tpw}{tpw}
\DeclareMathOperator{\pw}{pw}

\newcommand{\NN}{\mathbb{N}}

\renewcommand{\thefootnote}{\fnsymbol{footnote}}
\theoremstyle{plain}
\newtheorem{thm}{Theorem}
\newtheorem{lem}[thm]{Lemma}

\crefname{obs}{Observation}{Observations}
\newtheorem*{lem*}{Lemma}
\theoremstyle{definition}

\newtheorem*{conj*}{Conjecture}

\begin{document}
\title{\bf\boldmath\fontsize{18pt}{20pt}\selectfont 
Tree-partitions of graphs with given pathwidth}

\author{David~R.~Wood\,\footnotemark[2]}

\maketitle

\begin{abstract}
Graphs with bounded treewidth and bounded maximum degree are known to have tree-partitions of bounded width. What can be said if the bounded treewidth assumption is strengthened to bounded pathwidth? We prove that every graph with bounded pathwidth and bounded maximum degree has a tree-partition of bounded width, with the extra property that the underlying tree has bounded pathwidth. Moreover, we prove a lower bound showing that the bound on the pathwidth of the underlying tree is within a constant factor of optimal.
\end{abstract}

\footnotetext[2]{School of Mathematics, Monash University, Melbourne, Australia (\texttt{david.wood@monash.edu}). Research supported by the Australian Research Council and by NSERC. }

\renewcommand{\thefootnote}{\arabic{footnote}}

\section{Introduction}

Treewidth\footnote{For a non-empty tree $T$, a \defn{$T$-decomposition} of a graph $G$ is a collection $(B_x\subseteq V(G):x\in V(T))$ of subsets of $V(G)$ (called \defn{bags}) indexed by the nodes of a tree $T$, such that: (a) for every edge $uv\in E(G)$, some bag $B_x$ contains both $u$ and $v$; and (b) for every vertex $v\in V(G)$, the set $\{x\in V(T):v\in B_x\}$ induces a non-empty subtree of $T$. A \defn{tree-decomposition} is a $T$-decomposition for any tree $T$. 
Properties (a) and (b) are respectively called the `edge-property' and `vertex-property' of tree-decompositions. The \defn{width} of a tree-decomposition is the maximum size of a bag, minus $1$. The \defn{treewidth} of a graph $G$, denoted by \defn{$\tw(G)$}, is the minimum width of a tree-decomposition of $G$.} and pathwidth\footnote{A \defn{path-decomposition} is a $P$-decomposition for any path $P$, often simply denoted by the corresponding sequence of bags $(B_1,\dots,B_n)$. The \defn{pathwidth} of a graph $G$, denoted by $\pw(G)$, is the minimum width of a path-decomposition of $G$. By definition, $\tw(G)\leq\pw(G)$ for every graph $G$.}  are graph parameters that respectively measure how similar a given graph is to a tree or a path. They are of fundamental importance in structural and algorithmic graph theory; see \citep{Reed03,HW17,Bodlaender98} for surveys. This paper studies tree-partitions, which are a stronger structure than a tree-decomposition. For a non-empty tree $T$ and graph $G$, a \defn{$T$-partition} of $G$ is a partition $(B_x:x\in V(T))$ of $V(G)$ indexed by $V(T)$ such that for every edge $vw\in E(G)$ if $v\in B_x$ and $w\in B_y$, then $x=y$ or $xy\in E(T)$. The \defn{width} of a $T$-partition is~${\max\{ |{B_x}| : x \in V(T)\}}$. A \defn{tree-partition} is a $T$-partition for any tree $T$. The \defn{tree-partition-width} of $G$, denoted \defn{$\tpw(G)$}, is the minimum width  of a tree-partition of $G$. Tree-partitions were independently introduced by \citet{Seese85} and \citet{Halin91}, and have since been widely investigated \citep{Bodlaender-DMTCS99,BodEng-JAlg97,DO95,DO96,Edenbrandt86, Wood06,Wood09,BGJ22}. Applications of tree-partitions include graph drawing~\citep{CDMW08,GLM05,DMW05,DSW07,WT07}, 
graphs of linear growth~\citep{CDGHHHMW23}, 
nonrepetitive graph colouring~\citep{BW08}, 
clustered graph colouring~\citep{ADOV03,LO18}, 
monadic second-order logic~\citep{KuskeLohrey05}, 
network emulations~\citep{Bodlaender-IPL88, Bodlaender-IC90, BvL-IC86, FF82}, 
size Ramsey numbers~\citep{DKCPS,KLWY21}, 
and the edge-{E}rd{\H{o}}s-{P}{\'o}sa property~\citep {RT17,GKRT16,CRST18}. 
The key property in all these applications is that each vertex appears only once in the tree-partition (unlike in tree-decompositions). 

Tree-partitions are related to tree-decompositions and treewidth, as we now explain. 
Bounded tree-partition-width implies bounded treewidth, as noted by \citet{Seese85}. In particular, for every graph~$G$,
\[\tw(G) \leq 2\tpw(G)-1.\]
For this reason, tree-partition-width has also been called \defn{strong treewidth} \citep{Seese85,BodEng-JAlg97}. Of course, ${\tw(T) = \tpw(T) = 1}$ for every tree~$T$. But in general, $\tpw(G)$ can be much larger than~$\tw(G)$. For example, fan graphs on~$n$ vertices have pathwidth~2 and tree-partition-width~$\Omega(\sqrt{n})$ (see \citep{DO96} and \cref{Fan}). 

\begin{figure}[!ht]
\centering
\includegraphics[width=\textwidth]{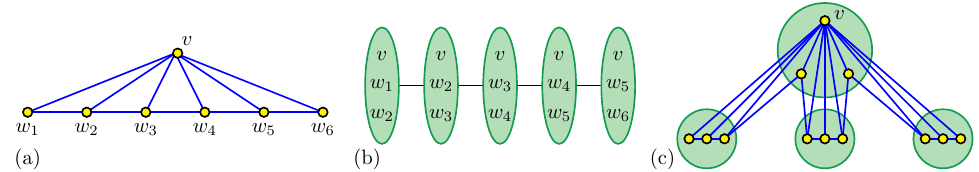}
\caption{(a) Fan graph, (b) path-decomposition with width 2, (c) tree-partition with width~$O(\sqrt{n})$.}
\label{Fan}
\end{figure}

On the other hand, the referee of \citep{DO95} showed that if the maximum degree and treewidth are both bounded, then so is the tree-partition-width, which is one of the most useful results about tree-partitions. A graph $G$ is \defn{non-trivial} if $E(G)\not=\emptyset$. Let \defn{$\Delta(G)$} be the maximum degree of a vertex of $G$. 
 
\begin{thm}[\cite{DO95}]
\label{TreeProduct}
There is a constant $c$ such that for any non-trivial graph $G$,
$$\tpw(G) \leq c (\tw(G)+1)\Delta(G).$$
\end{thm}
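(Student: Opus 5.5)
The plan is the standard separator-based construction. Let $k:=\tw(G)+1$ and $\Delta:=\Delta(G)\geq 1$. I will prove a stronger rooted statement by induction on $|V(G)|$.

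\emph{Claim.} Let $S\subseteq V(G)$ with $|S|\leq m$, where $m:=c'k\Delta$ for a suitable absolute constant $c'$. Then $G$ has a tree-partition of width at most $m$ in which $S$ is contained in a single bag, the root bag.

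Applying the Claim with $S=\emptyset$ (or $S$ a single vertex) gives the theorem.

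For the induction step I first handle the small case. If $|V(G)|\leq m$, put everything in one bag. Otherwise I may assume $|S|$ is large, say $|S|\geq m/2$, by adding arbitrary vertices to $S$ if necessary. Now let $S':=N(S)\setminus S$, so $|S'|\leq \Delta|S|\leq \Delta m$. The natural step is to make $S$ the root bag and recurse on the components of $G-S$, with the neighbours of $S$ in each component forming the new root set. The problem is that the new root set can have size up to $\Delta m$, which is far larger than $m$. This is why separators are needed.

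\emph{Balanced separators.} Since $\tw(G)\leq k-1$, for every set $X$ there is a set $Z$ with $|Z|\leq k$ such that every component of $G-Z$ contains at most $|X|/2$ vertices of $X$. I will use this to control the size of the sets passed down in the recursion.

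The scheme I would try is the following. Let the root bag be $B:=S$. In each component $C$ of $G-S$, let $X_C:=N(S)\cap V(C)$; note $|X_C|\leq \Delta m$. Treat $G[V(C)]$ with root set $X_C$, but replace $X_C$ by something smaller. To do this, bound the invariant differently: require the root set to have size at most $m$, and show that the children sets in the recursion are of size at most $m$.

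That direct approach fails because of the factor $\Delta$. The known fix uses a two-level structure. Given a root set $S$ with $|S|\leq m$, take a separator $Z$ for $S$ in $G$, of size $\leq k$, and set $B:=S\cup Z$. Then for each component $C$ of $G-B$ the new root set is $N(B)\cap V(C)$. Its size is bounded by $\Delta\,|(S\cup Z)\cap N(C)|$, and the part coming from $S$ is only half of $S$.

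I expect the bookkeeping here to be the main obstacle: the neighbourhood blow-up by $\Delta$ must be absorbed by the halving. Halving alone does not beat $\Delta$, so I would repeat the separator step $\log_2(2\Delta)$ times, or better use a weighted separator that splits $S$ into parts of size $\leq |S|/(2\Delta)$. Such a separator has size $O(k\Delta)$: repeatedly split the heavy components, using $O(\Delta)$ separators of size $k$ each. This gives $|Z|=O(k\Delta)$, and every component of $G-(S\cup Z)$ meets $S\cup Z$ in few vertices.

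The cleaner way to organise the induction is the following invariant. Each component $C$ passed down has its attachment set $N(C)$ lying inside the parent bag. I keep the new root set $R_C:=N(V(C))$, viewed inside $C$'s side. Its size is at most $\Delta$ times the number of parent-bag vertices adjacent to $C$. So it suffices that each component is adjacent to at most $m/\Delta$ vertices of the parent bag, and the weighted separator guarantees exactly this. Concretely, with bag $B=S\cup Z$ of size at most $m+O(k\Delta)$, choose $Z$ so that each component of $G-B$ has at most $m/(2\Delta)$ neighbours in $S$. This costs $O(k\Delta)$ vertices of $Z$ by the iterated-separator argument.

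The part I expect to be hardest is simultaneously controlling the neighbours in $Z$ itself, so that the totals close up with $m=c'k\Delta$. I would handle this by separating $S\cup Z$ with respect to a weight on $S$ only, and then bounding the $Z$-neighbours by $|Z|=O(k\Delta)$. Each child root set $N(B)\cap V(C)$ is then placed in the child bag, and induction applies because $|V(C)|<|V(G)|$. Finally, the edge property holds, since all edges go within bags or between a bag and its child.
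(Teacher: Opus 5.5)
\textbf{Verdict: the proposal has a genuine gap at the step that bounds the child root sets.} The paper does not prove this theorem itself; it quotes it from the literature, so I assess your argument on its own terms.

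Your framework is right: induct with a root set $S$, take root bag $B=S\cup Z$, and pass $R_C:=N(B)\cap V(C)$ down to each component $C$ of $G-B$. Your accounting for the $S$-part is also fine. If each component of $G-Z$ contains at most $m/(2\Delta)$ vertices of $S$, then $S$ contributes at most $m/2$ to $|R_C|$.

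The gap is the step you flag as hardest. Bounding the $Z$-part by $\Delta\,|N(C)\cap Z|\leq\Delta|Z|=O(k\Delta^2)$ gives $|R_C|\leq m/2+O(k\Delta^2)$. For large $\Delta$ this exceeds $m=c'k\Delta$, whatever the constant $c'$, so the induction does not close. The natural refinement does not rescue it either. If $Z$ is built by recursively splitting heavy components, a final component only sees the separators on its chain of ancestors. That chain has length about $\log_2(2\Delta)$, so $|N(C)\cap Z|=O(k\log\Delta)$, and you would only get $\tpw(G)=O(k\Delta\log\Delta)$.

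The missing idea is to choose $Z$ so that each component sees only $O(k)$ vertices of it. One way to do this:
\begin{enumerate}
\item Fix a rooted tree-decomposition $(T,\beta)$ of width $k-1$.
\item Repeatedly pick a deepest node whose remaining subtree carries more than $m/(2\Delta)$ vertices of $S$ not already in chosen bags, and delete that subtree. Each pick permanently uses up more than $m/(2\Delta)-k$ such vertices, so for $m\geq 4k\Delta$ there are at most $4\Delta$ picks.
\item Close the picked set $A$ under lowest common ancestors, which at most doubles it.
\end{enumerate}
Now every component of $T-A$ is adjacent to at most two nodes of $A$: the parent of its top node, and at most one node below it. Set $Z:=\bigcup_{a\in A}\beta(a)$. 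Then every component $C$ of $G-(S\cup Z)$ has at most $m/(2\Delta)$ neighbours in $S\setminus Z$ and at most $2k$ neighbours in $Z$. This gives $|R_C|\leq m/2+2k\Delta\leq m$.

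Your Claim also conflates two bounds. The root bag $S\cup Z$ can have size $m+O(k\Delta)$, so the Claim should assume $|S|\leq m$ but promise only width $m+O(k\Delta)$.
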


\citet{Wood09} showed that \cref{TreeProduct} is best possible up to the value of $c$. The upper bound on $c$ has been steadily improved~\citep{DO95,Wood09,DKKW}, most recently to $8$ by  \citet{DKKW}.

This paper considers the following question: What can be said about tree-partitions of graphs with bounded pathwidth and bounded maximum degree? In particular, can \cref{TreeProduct} be strengthened in this case? For example, one might hope to upper bound the path-partition-width of such graphs. The following lemma\footnote{\cref{PathPartitionDiameter} is roughly equivalent to a known lower bound on the bandwidth of graphs with given diameter~\cite{CS89,CCDG82,Chvatalova81}.} implies this is impossible. Here a \defn{path-partition} is a $P$-partition for any path $P$, and the \defn{path-partition-width} of a graph $G$ is the minimum width of a path-partition of $G$. 

\begin{lem}
\label{PathPartitionDiameter}
Let $G$ be a connected graph with diameter $d$ and path-partition-width $c$. Then $|V(G)|\leq c(d+1)$. 
\end{lem}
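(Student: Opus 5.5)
Take a $P$-partition $(B_1,\dots,B_n)$ of $G$ with width $c$, where $P=(1,2,\dots,n)$. Since $|V(G)|=\sum_i |B_i|\leq cm$, where $m$ is the number of non-empty bags, it suffices to show that, after discarding superfluous bags, at most $d+1$ bags are needed.

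First I will delete empty bags at the ends of the path. I will then argue that no bag strictly between two non-empty bags can be empty. This follows from connectivity: if $B_i=\emptyset$ but some non-empty $B_j$ has $j<i$ and some non-empty $B_k$ has $k>i$, then no edge of $G$ joins a vertex in $B_1\cup\dots\cup B_{i-1}$ to a vertex in $B_{i+1}\cup\dots\cup B_n$. This is because every edge lies within one bag or between consecutive bags. Both of these sets are non-empty, which contradicts the connectivity of $G$. So we may assume the non-empty bags are exactly $B_1,\dots,B_m$, consecutive along the path.

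Next I pick $u\in B_1$ and $v\in B_m$. For any path $Q$ in $G$ from $u$ to $v$, consecutive vertices of $Q$ lie in bags whose indices differ by at most $1$, by the defining property of a path-partition. So the bag index changes by at most one per edge of $Q$, and $Q$ has at least $m-1$ edges. Hence $m-1\leq \dist_G(u,v)\leq d$, which gives $m\leq d+1$ and therefore $|V(G)|\leq c(d+1)$.

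The only point needing care is the first step, ruling out internal empty bags via connectivity. The rest is the observation that bag indices behave like a $1$-Lipschitz function along edges. The case $m=1$ is trivial and is covered by the same bound.
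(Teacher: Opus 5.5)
Your proof is correct and is essentially the paper's argument: take $u$ in the first bag and $v$ in the last, note that bag indices change by at most one along each edge so that $m-1\leq \dist_G(u,v)\leq d$, and conclude $|V(G)|\leq cm\leq c(d+1)$. Trimming empty end bags is a useful clarification that the paper leaves implicit. Your step excluding internal empty bags is harmless but unnecessary: once the end bags are non-empty, $|V(G)|\leq c\cdot(\text{number of bags})$ holds regardless, the distance argument goes through unchanged, and connectivity only serves to make $\dist_G(u,v)$ finite.
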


\begin{proof}
Let $(D_1,D_2,\dots,D_p)$ be a path-partition of $G$ with width $c$. Let $v\in D_1$ and $w\in D_p$. So $p-1\leq \dist_{G}(v,w)\leq d$, and $|V(G)|\leq cp\leq c(d+1)$.
\end{proof}

To see the relevance of this lemma, consider the example of the \defn{comb} tree $S_n$, which consists 
\begin{wrapfigure}{r}{48mm}
\centering
\vspace*{-1.5ex}
\includegraphics[width=38mm]{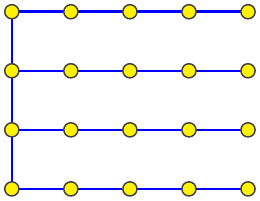}
\vspace*{-1ex}
\caption{The comb $S_4$. \label{Comb}}
\vspace*{-1.5ex}
\end{wrapfigure}
of $n+1$ disjoint paths $P,Q_1,\dots,Q_n$, each with $n$ vertices, where for each $i\in\{1,\dots,n\}$, the $i$-th vertex of $P$ is adjacent to the first vertex of $Q_i$, as illustrated in \cref{Comb}. 
Then $S_n$ has maximum degree 3, pathwidth 2 (by \cref{TreePathwidth} below), and diameter $3n-1$. By \cref{PathPartitionDiameter}, if $S_n$ has path-partition-width $c$, then $n^2=|V(G)|\leq 3cn$, implying $c\geq \frac{n}{3}$. This shows it is impossible to bound the path-partition-width of trees with pathwidth 2 and maximum degree 3. 

Our main result says that graphs with bounded pathwidth and bounded maximum degree have tree-partitions with bounded width, with the extra property that the tree indexing the partition has bounded pathwidth. 

\begin{thm}
\label{main}
 For every graph $G$ with pathwidth $k$ and maximum degree $d\geq 1$, there is a tree $T$ with $\pw(T) \leq 2k+1$ and a $T$-partition of $G$ with width at most $4d(k+1)^2$.
\end{thm}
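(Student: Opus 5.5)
We follow the standard proof of \cref{TreeProduct}, which builds the tree-partition by BFS-like layering and separators, and track the shape of the tree. Fix a path-decomposition $(B_1,\dots,B_n)$ of $G$ with width $k$; we may assume $G$ is connected, since otherwise we handle each component and join the resulting trees by edges between their roots, which does not raise the pathwidth beyond the maximum. The plan is a recursive procedure. It takes a connected subgraph $H$ together with a ``root'' set $R\subseteq V(H)$ of size at most about $2d(k+1)^2$. It outputs a tree-partition of $H$ in which $R$ is the bag of the root node $r$, every bag has size at most $4d(k+1)^2$, and the pathwidth of $T$ is at most $2k+1$. Moreover, we want a path-decomposition of $T$ of width $2k+1$ whose \emph{first} bag contains $r$.

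The recursive step goes as follows. Let $N$ be the set of neighbours of $R$ in $H-R$, so $|N|\leq d|R|$. If $N$ is small, at most $2d(k+1)^2$, take $R$ as the root bag and recurse on each component $C$ of $H-R$ with root set $N\cap V(C)$, attaching each child root to $r$. If $N$ is large, we instead split using the path-decomposition. The restriction of $(B_i)$ to $H$ contains an index $i$ such that $B_i$ separates $N$ into parts each containing at most half of $N$; indeed, a path-decomposition admits a balanced separator bag for any weight function. We then put $R\cup (B_i\cap V(H)\setminus R)$, or just $R$, in the root bag. The sizes are balanced exactly as in the proof of \cref{TreeProduct}, using $|N|\leq d|R|$ and $|B_i|\leq k+1$: each child's root set has size at most $\tfrac12 d|R| + d(k+1)$, so the invariant $|R|\leq 2d(k+1)^2$ is maintained, and root bags have size at most $|R|+d(k+1)\leq 4d(k+1)^2$.

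The key new point is the pathwidth of $T$. Since we use a path-decomposition rather than a tree-decomposition, $H-R-B_i$ splits into a ``left'' part, meeting only bags $B_j$ with $j<i$, and a ``right'' part, with $j>i$. We want every component of the remaining graph, except possibly the components on one side, to have its subtree recursively built with a strictly smaller ``interval'' of the path-decomposition. Then $T$ has the shape of a caterpillar-like recursion, and we can prove $\pw(T)\leq 2k+1$ by induction on a potential such as the number of vertices of $B_1\cup B_n$ that are in $R$. The intended invariant is this: the index interval of the path-decomposition spanned by $H$ is bounded on each side by a bag whose intersection with $H$ lies in $R$. Every time we recurse on a child in the ``middle'' (between two separator bags), the child is enclosed by two boundary bags, contributing up to $2(k+1)$ vertices. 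The pathwidth bound would then come from the standard lemma that if a tree $T$ has a path $P$ such that every component of $T-V(P)$ has pathwidth at most $w-1$, then $\pw(T)\leq w$.

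The main obstacle is this pathwidth accounting. We must choose the recursion so that, along a spine path of $T$ (for instance, following the child that still touches the unfinished end of the decomposition), all hanging subtrees have strictly smaller potential, and the potential ranges over at most $2k+2$ values. My first attempt would use as potential the number of ``open ends'' of the current interval together with how many of their bag vertices are still unresolved. The reason is that each of the two ends of an interval can be ``closed'' in at most $k+1$ steps, since each step removes a vertex of the boundary bag. That gives a recursion depth of $2(k+1)$ in the spine lemma, and hence $\pw(T)\leq 2k+1$. If the balancing by $N$ conflicts with this, I would separate the two roles. I would use the halving step only for width control, and check that halving steps never increase the potential of the resulting children.
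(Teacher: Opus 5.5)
There is a genuine gap. The new content of the theorem, $\pw(T)\le 2k+1$, is never established. You call the pathwidth accounting ``the main obstacle'' and only suggest a potential, but nothing in your recursion makes the subtrees hanging off a spine simpler. A child lying between two separator bags still has pathwidth up to $k$. Its boundary consists of two fresh bags of up to $k+1$ vertices each, and intervals of the decomposition can shrink one bag at a time. So neither the ``open ends'' potential nor interval length bounds the nesting depth of hanging subtrees by $O(k)$, and \cref{TreePathwidth} has nothing to induct on. Your recursion also branches freely: one child per component of $H-R$, all attached to $r$, and nothing controls the resulting pathwidth.

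The width bookkeeping fails too. With $|R|\le 2d(k+1)^2$, your bound $\tfrac12 d|R|+d(k+1)$ on a child's root set can reach $d^2(k+1)^2+d(k+1)$, which exceeds $2d(k+1)^2$ for every $d\ge 2$. Halving $N$ cannot absorb the factor $d$ lost in passing from $R$ to $N$; the standard proof halves $R$ itself and glues subtrees at a common root, which is again a source of branching. As a minor point, joining the trees of different components by edges between their roots can increase pathwidth. This is easily avoided by joining a node of the last bag of one path-decomposition to a node of the first bag of the next.

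The missing idea is to induct on $k$ using pieces that genuinely have smaller pathwidth.
\begin{itemize}
\item The paper takes a set $X$ of nodes whose bags cover $S$ (plus the two empty end bags). It then takes a \emph{maximal} set $Y$ of nodes whose bags are pairwise disjoint and disjoint from those of $X$.
\item By maximality every bag meets $Z=\bigcup\{D_z:z\in X\cup Y\}$, so $\pw(G-Z)\le k-1$.
\item The chosen bags cut $G-Z$ into pieces $G_e$, one per gap. Each piece has at most $2d(k+1)$ vertices with neighbours outside it.
\item Each stretch between consecutive $X$-nodes is folded into a path $Q'$ hanging from the root node $\alpha$, whose bag is $\bigcup\{D_x:x\in X\}$. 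The folding merges the at most two $Y$-nodes at equal distance from the stretch's ends. So each node of $Q'$ receives at most $2(k+1)$ vertices and at most two pieces, with a combined root set of size at most $4d(k+1)$.
\item Induction attaches trees of pathwidth at most $2k-1$ at the nodes of $Q'$. Hence every component of $T-\alpha$ is a path with hanging subtrees of pathwidth at most $2k-1$.
\end{itemize}
Two applications of \cref{TreePathwidth} then give $\pw(T)\le 2k+1$, and the recursion for $f(k,d,s)$ gives width at most $4d(k+1)^2$.
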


\cref{main} is proved in \cref{UpperBound}. Our second result shows that the $O(k)$ upper bound on $\pw(T)$ in \cref{main} is optimal, even for trees with maximum degree 3. 

\begin{thm}
\label{LowerBound}
Assume $f$ and $g$ are functions such that for every tree $G$ with pathwidth $k$ and maximum degree $3$, there is a tree $T$ with $\pw(T) \leq g(k)$ and there is a $T$-partition of $G$ with width at most $f(k)$. Then $g(k)\geq \floor{k/2}$.
\end{thm}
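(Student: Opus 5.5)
The plan is to build, for each $k$, a tree $G_k$ of maximum degree $3$ with $\pw(G_k)\le k$, and to show by induction on $k$ in steps of two that every $T$-partition of $G_k$ of width at most $c$ has $\pw(T)\geq\floor{k/2}$. Since $f(k)$ is fixed before $G_k$ is chosen, we may take $c=f(k)$ and let $G_k$ depend on $c$. The construction is the usual recursive one for trees of large pathwidth, with long subdivided connections and many spare copies. $G_{k+2}$ has a root of degree $3$, and each of its three branches is a subdivided binary tree whose leaves are attached by long paths to a large number $M=M(c)$ of disjoint copies of $G_k$. The upper bound $\pw(G_k)\le k$ follows from the standard recursive characterisation of pathwidth of trees, by giving the spine of each branch a path-decomposition of width $2$ and combining it with decompositions of the copies. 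This is also why the lower bound only loses a factor of $2$ per level.

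For the lower bound we use two facts about a $T$-partition $(B_x:x\in V(T))$ of a connected graph $H$ of width at most $c$. First, we may prune $T$ to the subtree formed by the nodes with non-empty bags and the edges that carry at least one edge of $H$. This preserves the partition and does not increase $\pw(T)$. Second, for a connected subgraph $H'\subseteq H$, the set $T_{H'}=\{x:B_x\cap V(H')\neq\emptyset\}$ induces a subtree of $T$, and $T_{H'}$ contains a node $x$ if and only if $H'$ meets $B_x$. Since $|B_x|\le c$, the node $x$ lies in $T_{H'}$ for at most $c$ of any family of vertex-disjoint subgraphs $H'$. So for every node $x$, all but at most $c$ copies of $G_k$ in $G_{k+2}$ have images avoiding $x$, and each such image lies in a single component of $T-x$. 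The restriction of the partition to such a copy is a $T_{H'}$-partition of width at most $c$. By induction, every component of $T-x$ containing the image of a copy of $G_k$ has pathwidth at least $\floor{k/2}$.

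Finally we use the classical lemma for trees: if some node $x$ of $T$ has three components of $T-x$ each with pathwidth at least $m$, then $\pw(T)\ge m+1$. So it suffices to find a node $x$ such that images of copies of $G_k$ lie in at least three distinct components of $T-x$. This is the step I expect to be the main obstacle, because the pathwidth lemma needs three separated witnesses, not two.

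I would choose $x$ as a weighted centroid of $T$, weighting each node by the number of copies whose image contains it. If at least three components of $T-x$ receive copies, we are done. Otherwise essentially all copies lie in at most two components $C_1,C_2$ of $T-x$. Using the centroid property, each of $C_1$ and $C_2$ receives a large number of copies. The vertices of $G_{k+2}$ in $B_x$, at most $c$ of them, must then separate these two large families of copies. Since the branches of $G_{k+2}$ are binary trees on $M$ copies, removing $c$ vertices leaves a piece containing many copies together with a branching vertex $y$ of degree $3$ in $G_{k+2}$ whose three sides each contain many copies. We then repeat the centroid argument inside $C_1$ or $C_2$ around the image of $y$. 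This time the two-component case is impossible, since it would require a set of at most $c$ vertices to split a subtree in a way its branching structure forbids once $M$ is large compared with $c$.

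Paying one extra level of the recursion to obtain the third branch is exactly why the induction advances $k$ by $2$ while the pathwidth bound advances by $1$. This gives $g(k)\ge\floor{k/2}$, with base cases $k\in\{0,1\}$ trivial. The bulk of the technical work is choosing $M$, the subdivision lengths and the depth of the binary branches as functions of $c$ so that this pigeonhole-and-separation step goes through.
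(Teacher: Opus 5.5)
Your framework is sound as far as it goes. Restricting the partition to a copy of $G_k$ gives a $T_{H'}$-partition, so by induction every component of $T-x$ that contains a whole image has pathwidth at least $\floor{k/2}$, and the three-branch lemma then finishes. But look at what the framework actually demands. Let $T^*$ be a minimal subtree of $T$ meeting every image. If $T^*$ is not a path, then a node of degree $3$ in $T^*$ has three components of $T-x$, each containing a whole image; this follows from leaves of $T^*$ and minimality. Conversely, if some path $Q$ of $T$ meets every image, then no node works, since at most two components of $T-x$ meet $Q$. So the whole proof amounts to ruling out that every copy meets $R:=\bigcup_{x\in V(Q)}B_x$ for a single path $Q$. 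That is exactly the situation the paper confronts, and your centroid step does not rule it out.

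If all images meet a path $Q$, every centroid you pick lies on $Q$, and every round is the two-component case. Your claim that this is ``impossible'' by the branching structure is unsupported. One vertex of $G$ already splits a subtree into two parts that each contain many copies. Also, two of the three sides of $y$ can land in the same component of $T-x'$: their neighbours of $y$ only have to lie in the same bag adjacent to $x'$. No local separation statement yields the contradiction.

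The construction is also caught between two requirements. A binary tree with $M(c)$ leaves that keeps a vertex with three heavy sides after deleting $c$ vertices (e.g.\ a complete one) has pathwidth growing with $c$. That is fatal, because $c=f(\pw(G))$ must be fixed before $G$. A caterpillar spine with one copy per tooth keeps the pathwidth bounded. However, it admits bounded-width partitions in which each tooth and its copy hang off the node containing its spine vertex, with the copy's root bag at that node. Then every image meets one path of $T$.

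The missing idea is a global counting argument. The paper nests two levels of central paths: a comb $H$ with $n$ teeth of length $n$, and a lower-level copy hanging from every tooth vertex. It then argues in four steps.
\begin{enumerate}
\item Every component of $G-R$ has all of its $R$-neighbours in a single bag, so it has at most $c$ of them.
\item A run of non-red tooth vertices lies in one such component, together with a path from each run vertex to a distinct red vertex in its copy. Hence runs have length at most $c$.
\item It follows that $H$ has at least $n^2/(c+1)$ red vertices.
\item Joining red vertices through non-red stretches gives a connected graph of diameter at most $3n-1$ that inherits the path-partition $(B_x:x\in V(Q))$. This contradicts \cref{PathPartitionDiameter} once $n>3c(c+1)$.
\end{enumerate}

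There are also two minor issues. Weighting nodes by the number of images containing them need not make both $C_1$ and $C_2$ rich in copies, since the weight can be dominated by the at most $c$ images through $x$; count each copy once instead. And attaching a path to a degree-$3$ root of a copy breaks the maximum degree $3$ condition.
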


\cref{LowerBound} is proved in \cref{sec:LowerBound}.
 
\section{Proof of Upper Bound}
\label{UpperBound}

We consider finite undirected graphs $G$ with vertex-set $V(G)$ and edge-set $E(G)$. For $v\in V(G)$ let $N_G(v):=\{w\in V(G):vw\in E(G)\}$. For $S\subseteq V(G)$, let $N_G(S):=\bigcup\{N_G(v)\setminus S: v\in S\}$. We use standard graph theory notation~\citep{Diestel5}.  

The proof of \cref{main} uses the following folklore lemma. We include the proof for completeness. See \citep{Suderman-IJCGA04,DSW07,EST-IC94,MPS-SJDM85,Scheffler90,Scheffler92} for closely related results.

\begin{lem}
\label{TreePathwidth}
For any integer $k\geq 1$, a tree $T$ has pathwidth at most $k$ if and only if $T$ has a path $P$ such that $\pw(T-V(P)) \leq k-1$.
\end{lem}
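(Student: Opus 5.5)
The ($\Leftarrow$) direction is a direct construction. Suppose $P=(p_1,\dots,p_m)$ is a path in $T$ and $T-V(P)$ has a path-decomposition of width at most $k-1$.

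First, assign each component $C$ of $T-V(P)$ to a vertex of $P$. Since $T$ is a tree, $C$ has exactly one neighbour in $P$ (at least one because $T$ is connected, at most one because $T$ has no cycles). Call it $p_{i(C)}$.

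Next, write down the path-decomposition, then thicken it. For each $i$, let $\mathcal{D}_i$ be a path-decomposition of width at most $k-1$ of the union of the components $C$ with $i(C)=i$; the restriction of the given decomposition provides one. Concatenate:
\[
(\{p_1\}),\ \mathcal{D}_1,\ (\{p_1,p_2\}),\ \mathcal{D}_2,\ (\{p_2,p_3\}),\ \dots,\ \mathcal{D}_m .
\]
Then add $p_i$ to every bag of $\mathcal{D}_i$. Each bag now has size at most $k+1$, so the width is at most $k$.

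Finally, check the two properties.
\begin{itemize}
\item Vertex-property: each $p_i$ occupies a contiguous interval, and each vertex of a component lies within its own $\mathcal{D}_i$.
\item Edge-property: an edge $p_ip_{i+1}$ lies in a joining bag, and an edge between $C$ and $p_{i(C)}$ lies in a bag of $\mathcal{D}_{i(C)}$ containing its endpoint in $C$.
\end{itemize}

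The ($\Rightarrow$) direction is the main obstacle. Take a path-decomposition $(B_1,\dots,B_n)$ of $T$ of width at most $k$, and choose $u\in B_1$ and $v\in B_n$. Let $P$ be the $u$--$v$ path in $T$.

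Each bag $B_j$ contains a vertex of $P$. Indeed, the set of bags containing some vertex of $P$ is the union of the intervals of the vertices of $P$. Consecutive vertices of $P$ are adjacent, so their intervals intersect, and hence the union is a single interval. It contains bags $1$ and $n$, so it is all of $\{1,\dots,n\}$.

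Therefore $(B_j\setminus V(P))_j$ is a path-decomposition of $T-V(P)$ in which every bag has size at most $k$, i.e.\ its width is at most $k-1$. This is the required $P$.

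The only delicate point is choosing $u$ and $v$ so that $P$ meets every bag. If $B_1$ or $B_n$ is empty, first discard empty bags at the ends of the decomposition. Since the lemma assumes $k\ge 1$, the case where $T$ is a single vertex is trivial.
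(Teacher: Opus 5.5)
Your proposal is correct and is essentially the paper's proof. For ($\Rightarrow$) you take the path from a vertex of the first bag to a vertex of the last bag and delete its vertices from every bag. For ($\Leftarrow$) you concatenate the width-$(k-1)$ decompositions of the components hanging off each $p_i$, with $p_i$ added to each of their bags and joining bags $\{p_i,p_{i+1}\}$ between consecutive blocks. Your interval-union argument and the trimming of empty end bags simply make explicit what the paper leaves to ``the properties of tree-decompositions''.
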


\begin{proof}
Let $(B_1,\dots,B_n)$ be a path-decomposition of a tree $T$ with width at most $k$. Since $T$ is connected, there is a path $P$ from $B_1$ to $B_n$. By the properties of tree-decompositions, $P$ has at least one vertex in each of $B_1,\dots,B_n$. So $(B_1\setminus V(P),\dots,B_n\setminus V(P))$ is a path-decomposition of $T-V(P)$ with width at most $k-1$. Thus $\pw(T-V(P)) \leq k-1$. (This argument holds for any connected graph.)\ 

Now assume that a tree $T$ has a path $P$ such that $\pw(T-V(P)) \leq k-1$. Say $P=(v_1,\dots,v_m)$. 
Each component of $T-V(P)$ is adjacent to exactly one vertex in $P$. Let $X_i$ be the union of the components of $T-V(P)$ that are adjacent to $v_i$. 
By assumption, $X_i$ has a path-decomposition $(B^i_1,\dots,B^i_{n_i})$ of width at most $k-1$. Let $C^i_j:=B^i_j\cup\{v_i\}$. 
Thus 
\[ \big( 
C^1_1,\dots,C^1_{n_1},
\{v_1,v_2\},
C^2_1,\dots,C^2_{n_2},
\{v_2,v_3\},
\dots,
\{v_{m-1},v_m\},
C^m_1,\dots,C^m_{n_m}
\big)
\]
is a path-decomposition of $T$ with width $k$. Hence $\pw(T)\leq k$. 
\end{proof}

Let $\NN:=\{0,1,2,\dots\}$. Define the function $f:\NN^3\to\NN$ where 
\[f(k,d,s):=
\begin{cases}
    \max\{s,1\} & \text{ if }k=0\\
    \max\{ s(k+1),\; 2(k+1) \,+\, f(k-1,d,4d(k+1))\} & \text{ if }k\geq 1
\end{cases}
\]

It is easily proved by induction on $k$ that if $s\leq 4d(k+1)$ then $f(k,d,s)\leq 4d(k+1)^2$. So the next lemma implies \cref{main} (taking $S=\emptyset$).

\begin{lem}
\label{MainLemma}
 For every graph $G$ with pathwidth at most $k$ and maximum degree at most $d$, for every set $S$ of vertices in $G$, there is a tree $T$ with $\pw(T)\leq 2k+1$ and there is a $T$-partition $(B_x:x \in V(T))$ of $G$ such that $|B_x| \leq f(k,d,|S|)$  for each $x \in V(T)$, and $S\subseteq B_r$ for some $r\in V(T)$. 
\end{lem}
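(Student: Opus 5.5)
We argue by induction on $k$, following the shape of the recursion defining $f$. For $k=0$ the graph $G$ has no edges, so we take $T$ to be a path (pathwidth at most $1$) with one node $r$ whose bag is $S$ (or a single vertex if $S=\emptyset$), and one node per remaining vertex, giving width $\max\{|S|,1\}=f(0,d,|S|)$. Now let $k\geq 1$ and fix a path-decomposition $(B_1,\dots,B_n)$ of $G$ of width at most $k$. We may assume $G$ is connected. Otherwise we treat the component(s) meeting $S$ as below. For every other component we apply the connected case with $S=\emptyset$. Each resulting tree is then attached by a single edge to a spine node, and we check via \cref{TreePathwidth} that the pathwidth bound survives.

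\emph{Construction of the spine.} Pick indices $1\leq t_0<t_1<\dots<t_m\leq n$ and let $W_i:=B_{t_i}\cup B_{t_i+1}$, so that $|W_i|\leq 2(k+1)$. These sets are chosen greedily along the decomposition, starting from the bags that meet $S$, so that every edge of $G$ between spine sets joins $W_i$ and $W_{i\pm1}$. The root bag is $S$ together with the bags it meets; this accounts for the term $s(k+1)$ in $f$. The spine of $T$ is a path $x_0x_1\dots x_m$, and $x_i$ receives $W_i$ minus the earlier $W_j$.

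The key point is that every component $C$ of $G-\bigcup_i W_i$ lies strictly between two consecutive separators. Hence $C$ has a path-decomposition consisting of bags $B_j\cap C$ with $t_i+1<j<t_{i+1}$. We then choose the spine so that it contains a vertex of each such bag, for example by routing a path of $G$ through the interval as in the proof of \cref{TreePathwidth}. This gives $\pw(C)\leq k-1$.

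\emph{Recursion on the components.} Let $S_C:=N_G(W_i\cup W_{i+1})\cap V(C)$. Then $|S_C|\leq d\cdot 4(k+1)$. We apply induction to $C$ with the set $S_C$, obtaining a tree $T_C$ with $\pw(T_C)\leq 2k-1$ and root bag of size at most $f(k-1,d,4d(k+1))$ containing $S_C$. We cannot simply hang $T_C$ off $x_i$, since $C$ may see both $W_i$ and $W_{i+1}$. Instead we merge the root bag of $T_C$ into a new node $y_i$ that also holds the $2(k+1)$ separator vertices, and we place these $y_i$ on the spine. This gives bags of size at most $2(k+1)+f(k-1,d,4d(k+1))$, which matches $f$ exactly. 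All edges then go between a node and its neighbour, or within a node.

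\emph{Pathwidth of $T$.} Removing the spine leaves the trees $T_C$ minus their roots. Each of these hangs off the spine through the children of its root, and the root's neighbourhood may need one extra path level. Two applications of \cref{TreePathwidth} then give $\pw(T)\leq (2k-1)+2=2k+1$.

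The main obstacle is the choice of the separators $W_i$. They must simultaneously guarantee two things. First, every leftover component has pathwidth at most $k-1$, so the spine must hit every bag in its interval. Second, each component attaches to at most two consecutive spine nodes, so that $|S_C|\leq 4d(k+1)$ and the merged bags remain on a path. I would first try taking $W_i$ to be consecutive pairs of bags at the points where a fixed shortest path from $S$ to $B_n$ enters new bags. If that fails, I would fall back on BFS layering from $S$ intersected with bag pairs.
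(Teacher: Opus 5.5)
Your proposal has a genuine gap. The two steps that carry the argument are left open, and the single-spine architecture you commit to cannot handle a general set $S$.

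First, the separators. You correctly isolate the two requirements, but you give no rule that achieves them. The mechanism you suggest, routing a path of $G$ through each interval as in \cref{TreePathwidth}, would put that path's vertices into spine bags with no bound on their number, destroying $|W_i|\le 2(k+1)$. The paper uses only whole bags, chosen as a \emph{maximal} set $Y$ of nodes whose bags are pairwise disjoint and disjoint from the bags $D_x$, $x\in X$, that cover $S$. Maximality forces every other bag to meet some $D_z$ with $z\in X\cup Y$, so deleting $Z$ lowers the width to $k-1$. Because the separators are whole bags, each piece $G_e$ lies strictly between two consecutive chosen nodes and is adjacent only to their two bags. Connectivity of $G$ plays no role.

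Second, and more seriously, the set $S$. The bags meeting $S$ can lie anywhere along the decomposition, and your own recursion forces this. A separator vertex may persist through the whole interval of a component $C$, so $S_C$ can be spread over all of it. Now suppose you use a single spine path $x_0x_1\dots x_m$ rooted at the $S$-bags. The root has only one spine neighbour. So the separators or component roots adjacent to the root bag, on both sides of each of up to $|S|$ bags meeting $S$, must all be packed into $x_0$ and $x_1$, which $f(k,d,|S|)$ does not accommodate in general.

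The paper instead takes a minimal set $X$ of nodes covering $S$, plus both ends of $P$. For each key subpath $Q$ between consecutive nodes of $X$, it hangs a separate path $Q'$ from the root $\alpha$, obtained by \emph{folding} $Q$ according to distance from its two endpoints. Folding is what places at each node $\ell_{Q,i}$ at most two separator bags (at most $2(k+1)$ vertices) and at most two pieces $G_e$, each with $|S_e|\le 2(k+1)d$, hence $|S_{Q,i}|\le 4d(k+1)$. Your $W_i=B_{t_i}\cup B_{t_i+1}$, two \emph{consecutive} bags, reproduces these numbers but not the structure that justifies them.

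The star of folded paths at $\alpha$ is also exactly where the two applications of \cref{TreePathwidth} come from: $2k-1$ for each $T_{Q,i}$, then $2k$ for each component of $T-\alpha$, then $2k+1$ for $T$. Your pathwidth paragraph asserts two such levels without a construction that produces them.
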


\begin{proof}
We proceed by induction on $k\geq 0$. First suppose that  $k=0$. So  $E(G)=\emptyset$. 
Say $V(G-S)=\{v_1,v_2,\dots,v_m\}$. 
Then the path-partition $(S,\{v_1\},\{v_2\},\dots,\{v_m\})$ satisfies the claim. 

Now assume that $k\geq 1$ (refer to \cref{ProofFigure}). 
Let $(D_y:y \in V(P))$ be a path-decomposition of $G$ with width at most $k$, where the first and last bags are empty. Let $X$ be a minimal set of vertices in $P$ such that the first and last nodes of $P$ are in $X$, and $S\subseteq \bigcup\{ D_x: x\in X\}$. So $|\bigcup\{ D_x:x\in X\}| \leq |S|(k+1)$. 

\begin{figure}[!ht]
    \includegraphics[width=\textwidth]{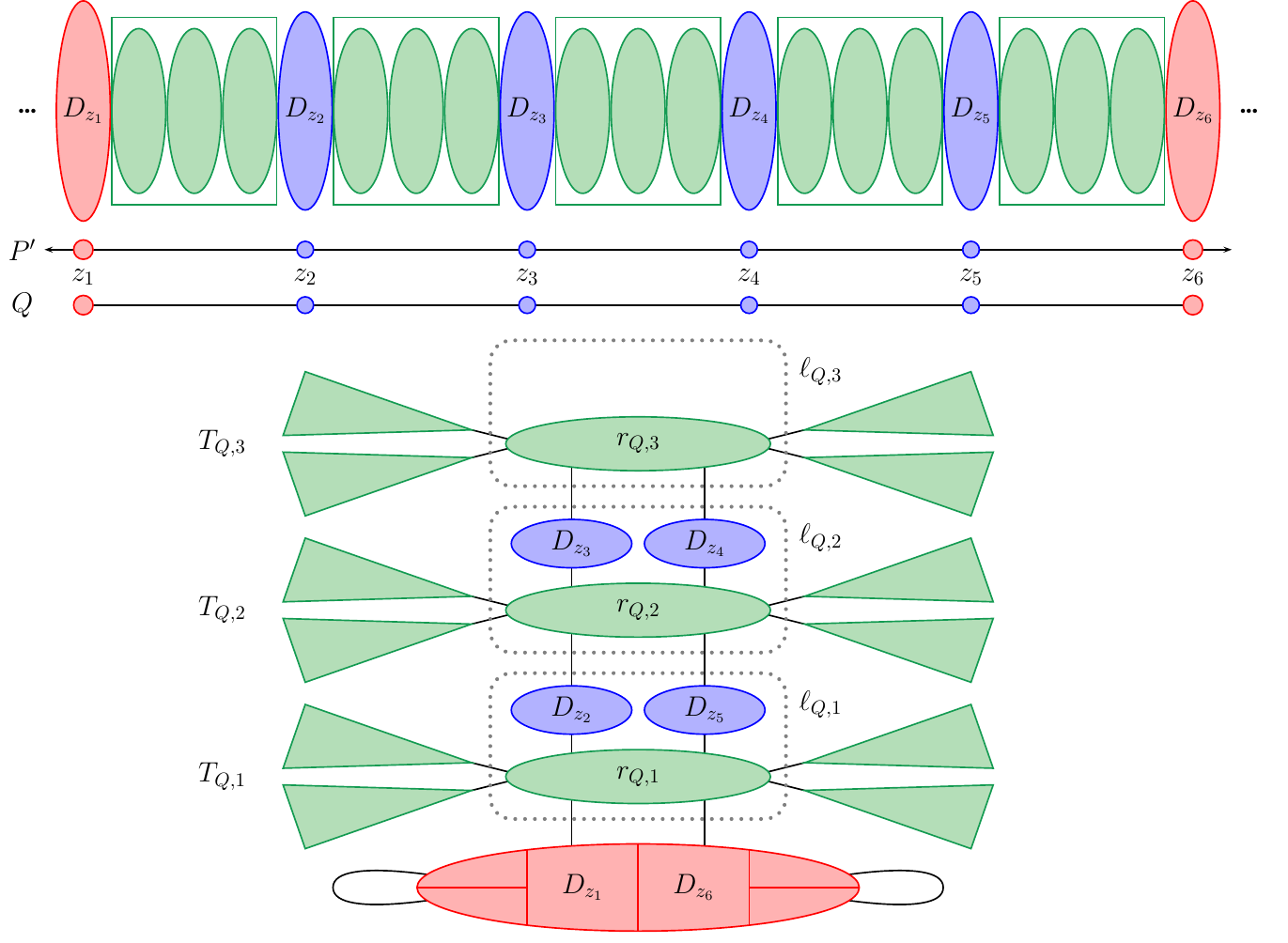}
    \caption{Construction in the proof of \cref{MainLemma}, where $X$ is red, $Y$ is blue.}
    \label{ProofFigure}
\end{figure}

Let $Y$ be a maximal set of vertices in $P-X$ such that $D_x\cap D_y=\emptyset$ for all $x\in X$ and $y\in Y$, and $D_{y_1} \cap D_{y_2}=\emptyset$ for all distinct $y_1,y_2\in Y$. 
Let $Z:= \bigcup\{D_z: z\in X\cup Y\}$. Let $G':= G-Z$. 
For all $a\in V(P-(X\cup Y))$ there exists $z\in X\cup Y$ such that $D_a\cap D_z\neq\emptyset$. 
So  $(D_a-Z: a \in V(P))$ is a path-decomposition of $G'$ with width at most $k-1$, and $\pw(G')\leq k-1$. 

Let $P'$ be the path obtained from $P$ by contracting each component of $P-(X\cup Y)$ to a single edge. So $V(P')=X\cup Y$. 

Consider an edge $e=z_1z_2$ of $P'$. Let $P_e$ be the $z_1z_2$-path in $P$. 
Let \[G_e:= G[ \bigcup\{ D_a\setminus Z:a\in V(P_e-z_1-z_2)\} ].\]
Note that $G_e\subseteq G'$, so $\pw(G_e)\leq k-1$. For any edge $vw$ of $G$ with $v\in V(G_e)$ and $w\not\in V(G_e)$, by the properties of tree-decompositions, $w\in D_{z_1}\cup D_{z_2}$. Let $S_e:= N_G( D_{z_1}\cup D_{z_2}) \cap V(G_e)$. So $|S_e|\leq 2(k+1)d$. Equivalently, $S_e$ is the set of vertices in $G_e$ adjacent to a vertex not in $G_e$. 

For distinct $e,e'\in E(P')$, the subgraphs $G_e$ and $G_{e'}$ are disjoint with no edge between them, since if $z$ is any node of $P'$ between $e$ and $e'$, then $D_z$ separates $G_e$ and $G_{e'}$. 

Say a subpath $Q$ of $P'$ is \defn{key} if the first vertex of $Q$ is in $X$, the last vertex of $Q$ is in $X$, and no internal vertex of $Q$ is in $X$. 
The key subpaths of $P'$ partition the edges of $P'$. 
Consider a key subpath $Q$ of $P'$. 
Let $x_1$ and $x_2$ be the endpoints of $Q$. So $x_1,x_2\in X$. 
For each integer $i\geq 0$, 
let $Y_{Q,i}$ be the set of vertices $y\in V(Q)$ with $\dist_{Q}( \{x_1,x_2\},y)=i$. 
Observe that $|Y_{Q,i}|\leq 2$. 
Let $Q'$ be the path $(\ell_{Q,0},\ell_{Q,1},\ell_{Q,2},\dots,\ell_{Q,m+1})$, where $m$ is the maximum integer such that $Y_{Q,m}\neq\emptyset$.  
Consider each edge $e=y_1y_2$ of $Q$. 
Say $y_1\in Y_{Q,i}$ and $y_2\in Y_{Q,j}$ with $i\leq j$. 
By construction, $j\leq i+1$. 
`Map' $e$ to $\ell_{Q,i+1}$.
Consider a node $\ell_{Q,i}$. 
Let $E_{Q,i}$ be the set of edges of $Q$ mapped to $\ell_{Q,i}$. 
By construction, $|E_{Q,i}|\leq 2$.  
Let $G_{Q,i}:=\bigcup\{G_e:e\in E_{Q,i}\}$, which is a subgraph of $G'$. So $\pw(G_{Q,i})\leq k-1$. 
Let $S_{Q,i}:=\bigcup\{S_e:e\in E_{Q,i}\}$. 
So $|S_{Q,i}|\leq 4(k+1)d$. 
By induction, there is a tree-partition $(B_x:x \in V(T_{q,i}))$ of $G_{Q,i}$ such that $|B_x| \leq f(k-1,d,4(k+1)d)$  for each $x \in V(T_{Q,i})$, and $\pw(T_{Q,i}) \leq 2(k-1)+1=2k-1$, and $S_{Q,i} \subseteq B_{r_{Q,i}}$ for some $r_{Q,i}\in V(T_{Q,i})$. 

We now define the tree-partition $(B_x:x\in V(T))$ of $G$. 
Initialise $T$ to be the tree with one node $\alpha$. 
Let $B_\alpha:=\bigcup\{D_x: x\in X\}$. 
So $S\subseteq B_\alpha$, and $|B_\alpha|\leq |S|(k+1)$. 
For each key subpath $Q$ of $P'$, 
add the path $Q'$ to $T$, where $\ell_{Q,0}$ is identified with $\alpha$. 
For each $i\geq 1$, 
add $T_{Q,i}$ to $T$, where the node $r_{Q,i}$ is identified with the node $\ell_{Q,i}$ of $Q'$. 
Let 
$B_{\ell_{Q,i}} := \bigcup\{ D_y : y \in Y_{Q,i} \} \cup B_{r_{Q,i}} $. 
So $|B_{\ell_{Q,i}}|\leq 2(k+1) + f(k-1,d,4d(k+1))\}$.
Observe that $T$ is a tree. 

We now show that $(B_x:x\in V(T))$ is a tree-partition of $G$. 
By construction, the bags are pairwise disjoint, and each vertex $v$ of $G$ appears in exactly one bag $B_x$. 
Consider an edge $vw$ of $G$ with $v\in B_a$ and $w\in B_b$. Our goal is to show that $a=b$ or $ab\in E(T)$. 
By assumption, there is a node $x\in V(P)$ with $v,w\in D_x$.

Case 1. $v,w\in D_x$ for some node $x\in X$: 
Then $v,w\in B_\alpha$, implying $a=b$, as desired. 
Now assume this case does not occur. 

Case 2. $v,w\in D_y$ for some node $y\in Y$: 
Then $v,w\in B_{\ell_{Q,i}}$ for some $Q$ and $i$, implying $a=b$, as desired. 
Now assume this case does not occur. 

Case 3. $v\in D_{z_1}$ and $w\in D_{z_2}$ for some distinct nodes $z_1,z_2\in X\cup Y$: 
Taking $z_1$ and $z_2$ at minimum distance in $P$, by the properties of tree-decompositions, 
$z_1z_2$ is an edge of some key subpath $Q$ (since Cases 1--2 do not occur). 
So $v\in B_{\ell_{Q,i}}$ and $w\in B_{\ell_{Q,i+1}}$ for some $i\geq 0$. 
Since $\ell_{Q,i}$ is adjacent to $\ell_{Q,i+1}$ in $T$, we have $ab\in E(T)$, as desired. 
Now assume this case does not occur. 

Case 4. $v,w\in D_x$ for some node $x\in  V(P)-(X\cup Y)$: 
Let $e=z_1z_2$ be the edge of $P'$, such that $x$ is between $z_1$ and $z_2$ in $P$. 
So $e$ is mapped to some node $\ell_{Q,i}$. 
Since Cases 1--3 do not occur, without loss of generality, $w\not\in D_{z_1}\cup D_{z_2}$. 
If $v\not\in D_{z_1}\cup D_{z_2}$ then $v,w\in V(G_e)$, and by induction, $a=b$ or $ab\in E(T_{Q,i})\subseteq E(T)$, as desired. 
Otherwise $v\in D_{z_1}\cup D_{z_2}$.
By construction, $w\in S_e$, implying $w\in B_{r_{Q,i}}$, 
and $v\in B_{\ell_{Q,i-1}}\cup B_{\ell_Q{,i+1}}$. 
Since $\ell_{Q,i-1},\ell_{Q,i},\ell_{Q,i+1}$ is a path in $T$, and $r_{Q,i}$ is identified with $\ell_{Q,i}$, 
we have $ab\in E(T)$, as desired. 

Hence $(B_x:x\in V(T))$ is a tree-partition of $G$. 
The width is at most 
$\max\{ |S|(k+1), 2(k+1)+ f(k-1,d,4d(k+1))=f(k,d,|S|)$, as desired. 

It remains to show that $\pw(T)\leq 2k+1$. 
By \cref{TreePathwidth}, it suffices to show that $\pw(T-\alpha)\leq 2k$. 
Since the pathwidth of any graph equals the maximum pathwidth of its connected components, 
it suffices to show that each component of $T-\alpha$ has pathwidth at most $2k$. 
For each component $C$ of $T-\alpha$ there is a path $Q'$ in $C$ such that each component of $C-V(Q')$ is a subtree of $T_{Q,i}$ for some $i$. Since $\pw(T_{Q,i})\leq 2k-1$, we have $\pw(C)\leq 2k$, as desired. 
\end{proof}

Note that there is a straightforward extension of \cref{main}, where instead of starting with a path-decomposition we start with a tree-decomposition indexed by a tree with small pathwidth. In particular, let $T$ be a tree with $\pw(T)\leq\ell$. Let $(B_x:x\in V(P))$ be a path-decomposition of $T$ with width at most $\ell$. Let $G$ be a graph with maximum degree $d$ that has a tree-decomposition $(C_x:x\in V(T))$ of width $k$. Let $B'_x:=\bigcup\{ C_y: y \in B_x\}$ for each $x\in V(P)$. So $(B'_x:x\in V(P))$ is a path-decomposition of $G$ with width at most $(\ell+1)(k+1)-1$. By \cref{main}, $G$ has a $T'$-partition of width $O(\ell^2 k^2d)$ for some tree $T'$ with $\pw(T')\leq 2(\ell+1)(k+1)-1$.

\section{Proof of Lower Bound}
\label{sec:LowerBound}

To prove \cref{LowerBound}, we define the following sequence $G_{1},G_{2},\dots$ of trees with maximum degree at most 3, each rooted at a vertex of degree at most 2, such that $\pw(G_{i})\leq i$, as illustrated in \cref{G4}. Fix a large integer $n$. Let $G_{1}$ be the $n$-vertex path rooted at an endpoint. Assume that $i\geq 2$ and $G_1,\dots,G_{i-1}$ have been defined. Define $G_{i}$ as follows. Let $P$ be an $n$-vertex path. For each vertex $v$ in $P$, add a copy of $G_{i-1}$ to $G_{i}$ whose root vertex is adjacent to $v$. Since the root of $G_{{i-1}}$ has degree at most 2, $G_{i}$ has maximum degree at most 3. Root $G_{i}$ at an endpoint of $P$, which has degree at most 2 in $G_{i}$. Call $P$ the \defn{central path} of $G_{i}$. By \cref{TreePathwidth}, $\pw(G_{i})\leq i$. 

\begin{figure}[!ht]
    \centering
    \includegraphics[scale=0.8]{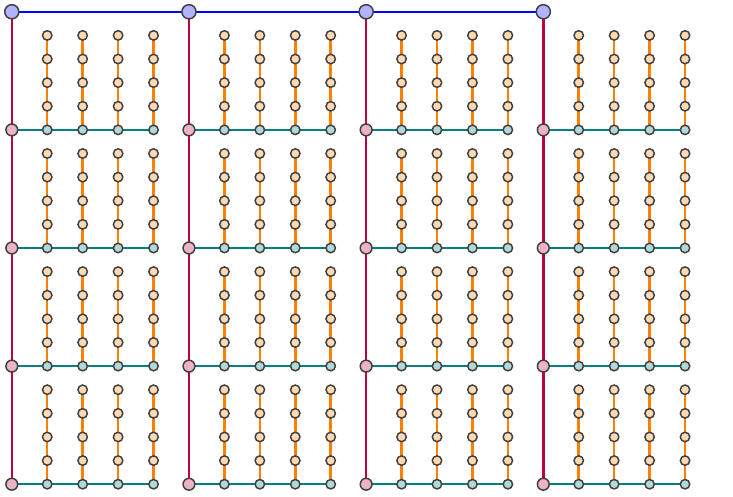}
    \caption{$G_4$ with $n=4$.}
    \label{G4}
\end{figure}

\cref{LowerBound} follows from the next lemma, taking $c:=f(2k)$.

\begin{lem}
For any integers $c,k\geq 1$ if $G_1,\dots,G_{2k}$ are defined with respect to an integer $n>3c(c+1)$, then for any tree $T$, if $G_{2k}$ has a $T$-partition of width at most $c$, then $\pw(T)\geq k$.
\end{lem}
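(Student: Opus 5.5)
The plan is induction on $k$, with the hypothesis as stated: if $G_{2k}$ has a $T$-partition of width at most $c$, then $\pw(T)\geq k$. For $k=1$ it suffices that $T$ has an edge. Since $G_2$ is connected and has more than $c$ vertices (as $n>3c(c+1)$), at least two bags are non-empty, and some edge of $G_2$ joins two distinct bags.

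For the inductive step, suppose $G_{2k}$ has a $T$-partition $(B_x:x\in V(T))$ of width at most $c$ and $\pw(T)\leq k-1$. By \cref{TreePathwidth} there is a path $R$ in $T$ with $\pw(T-V(R))\leq k-2$. Let $W:=\bigcup\{B_x:x\in V(R)\}$. The goal is a copy $H$ of $G_{2k-2}$ inside $G_{2k}$ with $V(H)\cap W=\emptyset$. Since $H$ is connected, its vertices then lie in bags of a single component $T'$ of $T-V(R)$. Restricting the partition gives a $T'$-partition of $G_{2k-2}$ of width at most $c$ with $\pw(T')\leq k-2$, which contradicts the induction hypothesis because $n>3c(c+1)$ does not depend on $k$. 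The copies of $G_{2k-2}$ come from two levels: for each vertex $v$ of the central path $P$ of $G_{2k}$ there is a copy $A_v$ of $G_{2k-1}$, and its central path $P_v$ carries $n$ disjoint copies of $G_{2k-2}$.

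The tool for controlling $W$ is a ``$1$-dimensionality'' estimate, the same idea as \cref{PathPartitionDiameter}. Order the nodes of $R$ as $r_1,\dots,r_p$. If $u\in B_{r_i}$ and $w\in B_{r_j}$, then $|i-j|\leq\dist_T(r_i,r_j)\leq \dist_G(u,w)$, since a $T$-partition maps each edge of $G$ to a node or an edge of $T$. Hence any set of vertices of $W$ that pairwise lie within $G$-distance $\delta$ has size at most $c(\delta+1)$.

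I would apply this estimate twice. First, to $P$ together with the roots of the copies $A_v$, to show that $W$ can interact substantially with only $O(c)$ of the copies $A_v$. Second, inside a chosen $A_v$, to $P_v$ together with the roots of its copies of $G_{2k-2}$, to show that $W$ meets only a few of those copies. With $n>3c(c+1)$, some $A_v$ and then some copy of $G_{2k-2}$ hanging off $P_v$ should avoid $W$ entirely. The factor $3$ should come from the three directions available at a degree-$3$ branch vertex (along $P$ or $P_v$ both ways, or down into a copy), and the factor $c+1$ from the ``$c$ per bag, $\delta+1$ bags'' count. The two levels explain why $2k$ rather than $k$ appears.

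The main obstacle is making this counting rigorous. The estimate bounds vertices of $W$ that are pairwise close, but $W$ may meet a copy deep inside it, far from $P$ or $P_v$, so the copies hit by $W$ need not be pairwise close. I would first try to exploit that $R$ is a path and $W$ has a path-partition of width $c$ along $R$. Each time $W$ enters a new hanging copy and then continues elsewhere, the image in $T$ must pass back through the node where that copy attaches. The path $R$ cannot revisit nodes, so it can ``branch off'' into only boundedly many subtrees per stretch. This should show that $W$ meets at most about $c+1$ copies per constant-length window of the central path. If that fails, the fallback is to strengthen the induction hypothesis to a rooted statement, for example that the root's bag lies on no path $R$ with $\pw(T-V(R))\leq k-2$, which gives more control over where $W$ can lie.
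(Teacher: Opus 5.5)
\textbf{There is a genuine gap: the core counting step is missing.}

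Your base case matches the paper's, as does your reduction: induction, the path $R$ from \cref{TreePathwidth}, and the observation that a copy of $G_{2k-2}$ disjoint from $W$ contradicts the induction hypothesis. Your distance estimate is also correct. Applied to the comb $K$ formed by $P$ and all $n$ paths $P_v$ (a copy of $S_n$, with diameter $3n-1$), it gives $|W\cap V(K)|\le 3cn$ directly. That is exactly the bound the paper obtains by building an auxiliary graph on the red vertices and invoking \cref{PathPartitionDiameter}.

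However, the step you flag as the main obstacle is essentially the whole proof, and neither local application you propose can supply it. The first application yields nothing. The $n$ roots of the $A_v$ are pairwise within distance $n+1$, and your estimate allows $c(n+2)\ge n$ vertices of $W$ among them. So as far as the estimate is concerned, $W$ may meet every $A_v$. The second application fails for the reason you give yourself: $W$ may hit each copy of $G_{2k-2}$ far from $P_v$, and no distance bound detects this.

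The missing idea is a separation property. A component $C$ of $G_{2k}-W$ lies in the bags of a single component of $T-V(R)$. That component is joined to $R$ by exactly one edge, at some node $x$. Hence $N(C)\cap W\subseteq B_x$, so $|N(C)\cap W|\le c$.

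Use this contrapositively instead of trying to locate an avoided copy. Assume every copy of $G_{2k-2}$ meets $W$, which is what the induction hypothesis gives you.
\begin{itemize}
\item Let $v_1,\dots,v_m$ be consecutive vertices of some $P_v$ outside $W$. They lie in one component $C$ of $G_{2k}-W$.
\item For each $i$, the vertex $v_i'$ of $W$ closest to $v_i$ inside the copy of $G_{2k-2}$ hanging from $v_i$ is a neighbour of $C$.
\item These $v_i'$ are distinct, so $m\le c$.
\end{itemize}
So a deep hit does not need to be near $P_v$; it only needs to be adjacent to the component containing the run. Consequently each of the $n$ paths $P_v$ contains roughly $n/(c+1)$ vertices of $W$. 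Then $|W\cap V(K)|$ is of order $n^2/(c+1)$, which contradicts $|W\cap V(K)|\le 3cn$ for large $n$. No rooted strengthening of the induction is needed.

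One remark on constants. Counted exactly, the run bound gives at least $(n-c)/(c+1)$ vertices of $W$ per path, so this argument closes for $n>3c(c+1)+c$ rather than $n>3c(c+1)$. The paper's count of $n/(c+1)$ glosses over this. It is harmless for \cref{LowerBound}, where $n$ is arbitrary.
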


\begin{proof}
We proceed by induction on $k$. The base case $k=1$ is trivial (since $|V(G_2)|\geq c+1$ implies $|V(T)|\geq 2$ and $\pw(T)\geq 1$). 
Now assume that $k\geq 2$ and the result holds for $k-1$ (that is, if $G_{2k-2}$ has a $T$-partition of width at most $c$, then $\pw(T)\geq k-1$).  

Suppose for the sake of contradiction that there is a tree $T$ with $\pw(T)\leq k-1$ and $G_{2k}$ has a $T$-partition $(B_x:x\in V(T))$ of width at most $c$. By \cref{TreePathwidth}, $T$ contains a path $Q$ such that $\pw(T-V(Q))\leq k-2$. Let $R:=\bigcup\{B_x:x\in V(Q)\}$. So $(B_x:x\in V(Q))$ is a path-partition of $G[R]$ with width at most $c$. 

Each component $C$ of $G_{2k}-R$ has a $T'$-partition of width at most $c$, where $T'$ is a component of $T-V(Q)$. So $\pw(T')\leq k-2$. By induction, $C$ contains no copy of $G_{2k-2}$. That is, every copy of $G_{2k-2}$ in $G_{2k}$ has a vertex in $R$. Also, the neighbours of $C$ in $R$ are in one bag $B_x$, so $C$ has at most $c$ neighbours in $R$.

Let $H$ be the subgraph of $G_{2k}$ consisting of the central path of $G_{2k}$ and the central paths of every copy of $G_{2k-1}$. So $H$ is a copy of the comb graph $S_n$. 
Colour a vertex of $H$ `red' if it is in $R$. 
Colour a vertex of $H$ `green' if it is in the central path of $G_{2k}$ but not in $R$. 
Colour a vertex of $H$ `blue' if it is in the central path of a copy of $G_{2k-1}$ but is not in $R$. 
Every vertex of $H$ is assigned one colour. 

Let $H'$ be the graph whose vertices are the red vertices in $H$, where $xy\in E(H')$ whenever every internal vertex of the $xy$-path in $H$ is blue or green. These internal vertices are in a single component $C$ of $G_{k}-R$, with both $x$ and $y$ in $N(C)\cap R$. So $x$ and $y$ are in the same bag $B_x$. Thus $(B_x:x\in V(Q))$ is a path-partition of $H'$. By construction, $H'$ is connected. Moreover, the diameter of $H'$ is at most the diameter of $H$, which is $3n-1$. 

Consider a blue vertex $v$ of $H$. Recall that the copy of $G_{2k-2}$ corresponding to $v$ has a vertex $v'$ in $R$. Choose such a vertex $v'$ at minimum distance from $v$ in $G_{2k}$. So the $vv'$-path in $G_{2k}$ (excluding $v'$) avoids $R$. Say $v_1,\dots,v_k$ is a path of blue vertices in $H$. Then $v_1,\dots,v_k$ is contained in a single component $C$ of $G_{2k}-R$, and $C$ contains the $v_iv_i'$-path (excluding $v_i'$) for each $i$. Thus $v'_1,\dots,v'_k$ are neighbours of $C$. Hence  $v'_1,\dots,v'_k$ are contained in a single bag $B_x$, implying $k\leq c$. So in the central path of each copy of $G_{2k-1}$, at most $c$ consecutive vertices are blue. Thus in each central path of $G_{2k-1}$ at least $n/(c+1)$ vertices are red. Hence, $|V(H')|\geq n^2/(c+1)$. 

By \cref{PathPartitionDiameter},  $n^2/(c+1) \leq |V(H')| \leq 3cn$, implying $n\leq 3c(c+1)$. This is the desired contradiction for $n>3c(c+1)$. 
\end{proof}

We finish with an open problem. What is the optimal width bound in \cref{main}? In particular, are there function $f$ and $g$ such that $f(k,d)\in o(k^2d)$, and for every graph $G$ with pathwidth $k$ and maximum degree $d$, there is a tree $T$ with $\pw(T) \leq g(k)$ and a $T$-partition of $G$ with width at most $f(k,d)$?

\subsection*{Acknowledgements} Thanks to Kevin Hendrey and Freddie Illingworth, with whom the questions motivating the present paper were formulated. 

{
\fontsize{10pt}{11pt}\selectfont
\def\soft#1{\leavevmode\setbox0=\hbox{h}\dimen7=\ht0\advance \dimen7 by-1ex\relax\if t#1\relax\rlap{\raise.6\dimen7 \hbox{\kern.3ex\char'47}}#1\relax\else\if T#1\relax \rlap{\raise.5\dimen7\hbox{\kern1.3ex\char'47}}#1\relax \else\if d#1\relax\rlap{\raise.5\dimen7\hbox{\kern.9ex \char'47}}#1\relax\else\if D#1\relax\rlap{\raise.5\dimen7 \hbox{\kern1.4ex\char'47}}#1\relax\else\if l#1\relax \rlap{\raise.5\dimen7\hbox{\kern.4ex\char'47}}#1\relax \else\if L#1\relax\rlap{\raise.5\dimen7\hbox{\kern.7ex \char'47}}#1\relax\else\message{accent \string\soft \space #1 not defined!}#1\relax\fi\fi\fi\fi\fi\fi}

}
\end{document}